\pgfplotsset{every tick label/.append style={font=\footnotesize}}
\pgfplotsset{compat=1.18}
\newcolumntype{K}[1]{>{\centering\arraybackslash$}p{#1}<{$}}
\newcolumntype{R}{>{\raggedleft\arraybackslash}X}
\newcolumntype{L}{>{\raggedright\arraybackslash}X}
\newcolumntype{C}{>{\centering\arraybackslash}X}
\newcolumntype{A}{>{\columncolor{gray!25}}C}
\newcolumntype{a}{>{\columncolor{gray!25}}c}
\newlength{\tablen}
\newcolumntype{.}{D{.}{.}{-1}}
\renewcommand\p@subfigure{\arabic{figure}.}
\renewcommand\p@subtable{\arabic{table}.}
\setlist[itemize]{leftmargin=2.5\parindent}
\setlist[enumerate]{leftmargin=2.5\parindent}
\def\addlegendimage{\csname pgfplots@addlegendimage\endcsname}
\theoremstyle{plain}
\newtheorem{corollary}{Corollary}
\newtheorem{proposition}{Proposition}
\newtheorem{theorem}{Theorem}
\theoremstyle{definition}
\newtheorem{example}{Example}
\theoremstyle{remark}
\newtheorem{remark}{Remark}
\let\@fnsymbol\@alph
\def\keywords{\vspace{.5em} 
{\noindent \textit{Keywords}: }}
\def\AMS{\vspace{.5em} 
{\noindent \textbf{\emph{MSC} class}: }}
\def\JEL{\vspace{.5em} 
{\noindent \textbf{\emph{JEL} classification number}: }}
\title{The logarithmic least squares priorities and \\ ordinal violations in the best-worst method}
\author{\href{https://sites.google.com/view/laszlocsato}{L\'aszl\'o Csat\'o}\thanks{~E-mail: \emph{laszlo.csato@sztaki.hun-ren.hu}} }
\affil{Institute for Computer Science and Control (SZTAKI) \\
Hungarian Research Network (HUN-REN) \\
Laboratory on Engineering and Management Intelligence \\
Research Group of Operations Research and Decision Systems}
\affil{Corvinus University of Budapest (BCE) \\
Institute of Operations and Decision Sciences \\
Department of Operations Research and Actuarial Sciences}
\affil{Budapest, Hungary}
\date{\today}
\def\Dedication{
\begin{small}
{\noindent
``\emph{Statistical results show that BWM performs significantly better than AHP with respect to the consistency ratio, and the other evaluation criteria: minimum violation, total deviation, and conformity.}''\footnote{~Source: \citet[p.~49]{Rezaei2015}.}
}
\end{small}

\vspace{0.5cm} 
\justify }
\begin{document}

\maketitle
\thispagestyle{empty}
\Dedication

\begin{abstract}
\noindent
The best-worst method is an increasingly popular approach to solving multi-criteria decision-making problems. However, the usual prioritisation techniques may result in an ordinal violation if the best (worst) alternative identified in the first step does not receive the highest (lowest) weight.
The current paper gives two sufficient conditions for the logarithmic least squares method, applied to an incomplete best-worst method matrix, to guarantee the lack of ordinal violations. Our results provide another powerful argument for using the logarithmic least squares priorities in the best-worst method.

\keywords{Best-worst method; decision analysis; incomplete pairwise comparisons; logarithmic least squares method; ordinal violation}

\AMS{15A06, 90B50, 91B08}

\JEL{C44, D71}
\end{abstract}

\clearpage

\section{Introduction} \label{Sec1}

Pairwise comparisons are widely used in multi-criteria decision-making (MCDM) if the decision-maker cannot directly determine the weights of the alternatives/criteria. For example, the Analytic Hierarchy Process (AHP), one of the most popular MCDM methodologies, is based on pairwise comparisons \citep{Saaty1980}. While focusing on pairwise comparisons allows the decomposition of the original problem into subproblems that are easier to deal with, this simplification has a price. First, the pairwise comparisons may be inconsistent, making the derivation of the priorities far from trivial. Second, the number of pairwise comparisons needed for $n$ alternatives is $n(n-1)/2$, which can be costly and cumbersome to obtain if $n$ is relatively high.

To address these problems, \citet{Rezaei2015} has introduced the best-worst method: after the identification of the best (most desirable, most important) and the worst (least desirable, least important) alternatives, all other alternatives are compared to the best and the worst alternatives, respectively. This approach requires only $2n-3$ comparisons. Due to its efficiency in reducing the number of pairwise comparisons and its ability to maintain consistency, the best-worst method has attracted the attention of several researchers and has been applied to solve many real-world problems \citep{MiTangLiaoShenLev2019}.

\subsection{Prioritisation techniques in the best-worst method} \label{Sec11}


Besides introducing the best-worst method, \citet{Rezaei2015} suggested the least worst absolute error method to derive the priority vector. However, it has two shortcomings. First, it is a nonconvex optimisation problem without a closed-form solution. Second, it might have multiple optimal solutions. To address the first issue, \citet{BrunelliRezaei2019} defined a new metric, which is mathematically more sound and leads to an optimisation problem that can be simply linearised and solved. However, similar to the method of \citet{Rezaei2015}, the approach of \citet{BrunelliRezaei2019} usually has multiple optimal solutions \citep{TuWuPedrycz2023}. Hence, the most popular prioritisation technique for the best-worst method is the linear model of \citet{Rezaei2016}, which guarantees the uniqueness of the weights.

Nonetheless, the above procedures \citep{BrunelliRezaei2019, Rezaei2015, Rezaei2016} fail to take indirect comparisons into account.
Thus, recent studies have proposed to consider the implied incomplete pairwise comparison matrix, and derive priorities from this best-worst method matrix by the well-established techniques of multi-criteria decision-making \citep{TuWuPedrycz2023, XuWang2024}.
In particular, the logarithmic least squares method is found to be a reasonable technique since it
(a) is simple to calculate as the solution of a linear system of equations \citep{BozokiFulopRonyai2010};
(b) leads to unique weights \citep{BozokiFulopRonyai2010}; and
(c) accounts for indirect comparisons, too \citep{TuWuPedrycz2023}.
Furthermore, \citet{XuWang2024} show that the corresponding priority vector is efficient, which is a crucial property of the weights \citep{BlanqueroCarrizosaConde2006, BozokiFulop2018, FurtadoJohnson2024b, SzadoczkiBozoki2024}.

\subsection{The issue of ordinal violations} \label{Sec12}

The best-worst method requires pairwise comparisons concerning the best and the worst criteria. However, the priorities derived from a best-worst method matrix might exhibit an ordinal violation \citep{TuWuPedrycz2023}: the best (worst) alternative identified in the first stage does not receive the highest (lowest) weight in the final stage, which necessitates the re-examination of the pairwise comparisons.

According to our knowledge, the issue of ordinal violation has been considered first for the best-worst method by \citet{TuWuPedrycz2023}. Nevertheless, it has been widely analysed in the literature on pairwise comparison matrices since the pioneering work of \citet{GolanyKress1993}.
\citet{SirajMikhailovKeane2012b} present a heuristic algorithm that improves ordinal consistency by identifying and eliminating intransitivities in pairwise comparison matrices.
\citet{ChenKouLi2018} propose a procedure based on linear programming to update the weight vector obtained by popular prioritisation methods with a better one to minimise the number of rank violations.
\citet{FaramondiOlivaBozoki2020} extend the logarithmic least squares method for incomplete pairwise comparison matrices \citep{BozokiFulopRonyai2010} with a procedure composed of two complementary steps. The first stage maximises the number of ordinal constraints satisfied, while the second phase optimises cardinal preferences with additional restrictions determined in the first stage.
\citet{TuWu2021} study the elimination of rank violations in fuzzy preference relations by deriving a system of constraints to ensure that this requirement is explicitly controlled in the optimisation model.
\citet{WangPengKou2021} design a two-stage ranking method in order to minimise ordinal violations, the number of conflicts between the ranking and the dominance relationship for pairwise comparison matrices.
\citet{YuanWuTu2023} suggest an optimisation model that considers both cardinal consistency and ordinal consistency to estimate unknown preferences in the case of incomplete fuzzy preference relations.
\citet{Csato2024b} shows that the lexicographically optimal completion of missing pairwise comparisons \citep{AgostonCsato2024} combined with any reasonable weighting method eliminates all ordinal violations for incomplete pairwise comparison matrices represented by a weakly connected directed acyclic graph.

\subsection{The research gap} \label{Sec13}

It is already known that the usual prioritisation techniques of the best-worst method \citep{BrunelliRezaei2019, Rezaei2015, Rezaei2016} can lead to an ordinal violation \citep[Table~4]{TuWuPedrycz2023}. In particular, the Monte Carlo simulation method of \citet{TuWuPedrycz2023} shows the following probabilities of an ordinal violation:
(1) 0.95\% for the preference weighted least worst absolute error of \citet{Rezaei2016};
(2) 14.97\% for the least worst absolute error of \citet{Rezaei2015}; and
(3) 19.24\% for the multiplicative optimisation problem of \citet{BrunelliRezaei2019} that can be easily linearised.

However, the thorough numerical experiment of \citet{TuWuPedrycz2023} fails to identify any best-worst method matrix where the logarithmic least squares method shows an ordinal violation. This implies important questions that have not been answered by the existing literature: What conditions can guarantee the lack of ordinal violation by the logarithmic least squares method? What is the probability that an ordinal violation emerges in a best-worst method matrix?

As an analogy, consider Pareto (in)efficiency of Saaty's eigenvector method \citep{Saaty1980} for multiplicative pairwise comparison matrices. \citet{BlanqueroCarrizosaConde2006} have revealed that the eigenvector method does not always yield an efficient weight vector. According to \citet{Bozoki2014a}, inefficiency occurs for a class of pairwise comparison matrices with arbitrarily small inconsistency. In recent years, several sufficient conditions have been proved for the efficiency of the right Perron eigenvector \citep{Abele-NagyBozoki2016, Abele-NagyBozokiRebak2018, daCruzFernadesFurtado2021, FernandesFurtado2022, FernandesPalheira2024, Furtado2023, FurtadoJohnson2024c}, and further families of matrices for which the Perron vector is inefficient have also been found \citep{FurtadoJohnson2024d}.

\subsection{Main contributions} \label{Sec14}

The current paper investigates ordinal violations in the best-worst method.
We provide sufficient conditions for the logarithmic least squares method to guarantee the lack of ordinal violations in a best-worst method matrix. The relatively general conditions contain a uniform lower bound for the dominance of the best alternative over all other alternatives, as well as for the dominance of all other alternatives over the worst alternative. The maximal numerical preference is restricted by a polynomial of this uniform lower bound.

Our first theorem explains why the logarithmic least squares method shows no ordinal violation in the dataset analysed by \citet{TuWuPedrycz2023}: their procedure used to generate random best-worst method matrices almost satisfies the conditions derived here. Nonetheless, the proof of the theorem uncovers some best-worst method matrices for which the logarithmic least squares method does not avoid an ordinal violation.

According to our second result, if the Saaty scale is applied to a best-worst method matrix and the best alternative is preferred to the worst alternative at least as much as the preference between any two alternatives, then the logarithmic least squares priorities are always compatible with the preference order given by the decision-maker if the number of alternatives does not exceed 26.

\begin{figure}[t!]
\centering

\begin{tikzpicture}[scale=1,auto=center, transform shape, >=triangle 45]
\tikzstyle{every node}=[draw,align=center];
  \node (N1) at (0,11) {Identifying the best and \\ the worst alternatives};
  \node (N2) at (0,9) {Comparing all alternatives \\ to the best and the worst};
  \node (N3) at (0,7) {Constructing the (incomplete) \\ best-worst method matrix};
  \node (N4) at (0,5) {Calculating the priorities};
  \node[shape = ellipse] (N5) at (0,3) {Is there any \\ ordinal violation?};
  \node (N6) at (0,0) {The result is \\ acceptable};
  \node (N7) at (5,0) {The result is \\ contradictory};

\tikzstyle{every node}=[align=center];
  \draw [->,line width=1pt] (N1) -- (N2)  {};
  \draw [->,line width=1pt] (N2) -- (N3)  {};
  \draw [->,line width=1pt] (N3) -- (N4)  {};
  \draw [->,line width=1pt] (N4) -- (N5)  {};
  \draw [->,line width=1pt] (N5) -- (N6)  node [midway, left] {No};
  \draw [->,line width=1pt] (N5) -- (N7)  node [near start, above right] {Yes};
  \draw [line width=1pt] (N7) -- (5,11);
  \draw [->,line width=1pt] (5,11) -- (N1) {};
  \draw [line width=1pt,dashed] (3,12) -- (7,12) -- (7,-1) -- (3,-1) -- (3, 12) {};
  \node at (9.5,5.5) {\textbf{Our results give} \\ \textbf{sufficient conditions} \\ \textbf{to avoid this loop} \\ \textbf{by guaranteeing} \\ \textbf{the lack of any} \\ \textbf{ordinal violation}};
\end{tikzpicture}
\captionsetup{justification=centering}
\caption{Our main contribution to the theoretical background of the best-worst method}
\label{Fig1}
\end{figure}

Figure~\ref{Fig1} presents a graphical representation of the significance of these results. The left-hand side outlines the main steps of the best-worst method. However, if the derived priorities contain an ordinal violation, then the feedback mechanism indicated on the right-hand side should be used, and the necessary reconsideration of the preferences involves further interaction with the decision-maker who thinks the task is already finished. The conditions provided in Theorems~\ref{Theo1} and \ref{Theo2} guarantee that this additional loop is never reached.

\subsection{The significance of the results and their practical relevance} \label{Sec15}

The best-worst method has been developed to substantially reduce the number of pairwise comparisons required to solve a multi-criteria decision-making problem. However, the derived priorities may contain an ordinal violation that could be disturbing and difficult to accept by the decision-maker: sometimes, either the originally identified best alternative does not have the highest weight, or the originally identified worst alternative does not have the lowest weight. Such a result calls for a cumbersome re-examination of the preferences, which seriously threatens the main advantage of this approach, its simplicity.

Our theorems provide sufficient conditions to eliminate the possibility of an ordinal violation if the logarithmic least squares method is applied to a best-worst method matrix. These constraints can be directly built into any decision-making software to guarantee the lack of ordinal violations. Hence, imposing the additional restrictions derived in the paper will prevent the use of an extensive feedback mechanism that is needed to treat a severe shortcoming of the most popular prioritisation techniques suggested for the best-worst method.

\subsection{Structure} \label{Sec16}

The paper is organised as follows.
Section~\ref{Sec2} presents the logarithmic least squares priority deriving technique in the context of the best-worst method. The main results on ordinal violations are verified in Section~\ref{Sec3}. Finally, Section~\ref{Sec4} discusses our findings and concludes.

\section{The logarithmic least squares priorities in a best-worst method matrix} \label{Sec2}

A \emph{pairwise comparison matrix} $\mathbf{A} = \left[ a_{ij} \right]$ is an $n \times n$ positive ($a_{ij} > 0$ for all $1 \leq i,j \leq n$) matrix that satisfies reciprocity: $a_{ij} a_{ji} = 1$ for all $1 \leq i,j \leq n$. As the number of pairwise comparisons is a quadratic function of the number of alternatives $n$, there are several suggestions to reduce the number of questions asked by the decision-maker. In the best-worst method (BWM), only the pairwise comparisons concerning the best ($B$) and the worst ($W$) alternatives should be obtained \citep{Rezaei2015}. This results in a \emph{best-worst method matrix} $\mathbf{A} = \left[ a_{ij} \right]$, where $a_{Bj}$, $a_{jB}$, $a_{Wj}$, and $a_{jW}$ are known for all $1 \leq j \leq n$.

Following \citet[Section~5.2]{TuWuPedrycz2023}, we assume in the following that $a_{Bj} > 1$ and $a_{jW} < 1$ for all $j \neq B,W$. Furthermore, the matrix entries are real numbers---but not necessarily integers or reciprocals of integers---between 1/9 and 9 in accordance with the recommendation of Saaty \citep{Saaty1980}.

Pairwise comparisons are primarily used to derive a reliable priority vector. This is trivial if the matrix is consistent, that is, $a_{ij} a_{jk} = a_{ik}$ for all $1 \leq i,j,k \leq n$, when a unique priority vector $\mathbf{w} = \left[ w_i \right]$ exists such that $a_{ij} = w_i / w_j$ holds for all $1 \leq i,j \leq n$.
However, pairwise comparison matrices are usually \emph{inconsistent}. \citet{Brunelli2018} and \citet{KulakowskiTalaga2020} survey inconsistency measures used for complete and incomplete pairwise comparison matrices, respectively.

The most prominent inconsistency index is the inconsistency ratio $\mathit{CR}$ proposed by Saaty \citep{Saaty1980}. It is a linear transformation of the dominant eigenvalue of the pairwise comparison matrix. The value of $\mathit{CR}$ depends on the so-called random index $\mathit{RI}$ that has been calculated for complete pairwise comparison matrices \citep{BozokiRapcsak2008}, as well as for incomplete matrices \citep{AgostonCsato2022} and, specifically, for the best-worst method \citep{TuWuPedrycz2023}. 

Analogously, a number of prioritisation techniques have been suggested in the literature for complete pairwise comparison matrices \citep{ChooWedley2004}. One of the most popular procedures is the \emph{logarithmic least squares method} (LLSM) \citep{CrawfordWilliams1985, WilliamsCrawford1980}, partially due to its favourable axiomatic properties \citep{Barzilai1997, BozokiTsyganok2019, Csato2018b, Csato2019a, Fichtner1984, Fichtner1986, LundySirajGreco2017}.
LLSM has been extended to the case of incomplete pairwise comparison matrices \citep{Kwiesielewicz1996, TakedaYu1995}.

According to \citet[Theorem~4]{BozokiFulopRonyai2010}, the optimal solution of the incomplete LLSM problem is unique if the undirected graph associated with the pairwise comparison matrix is connected (which is guaranteed in the best-worst method) and can be calculated as follows:
\begin{equation} \label{eq_LLSM}
\mathbf{L} \mathbf{y} = \mathbf{r} \qquad \text{and} \qquad \sum_{i=1}^n y_i = 0,
\end{equation}
where
\begin{itemize}
\item
$\mathbf{L} = \left[ \ell_{ij} \right]$ is the Laplacian matrix of the associated graph ($\ell_{ii}$ is the degree of node $i$; $\ell_{ij} = -1$ if $i \neq j$ and $a_{ij}$ is known; $\ell_{ij} = 0$ if $i \neq j$ and $a_{ij}$ is unknown);
\item
$\mathbf{y} = \left[ y_i = \log w_i \right]$ is the vector derived by taking the elementwise logarithm of the priority vector $\mathbf{w} = \left[ w_i \right]$; and
\item
$\mathbf{r} = \left[ r_i = \log \left( \prod_{j=1}^n a_{ij} \right) \right]$ is the vector given by logarithms of the multiplied entries in each row.
\end{itemize}
Note that \citet[Theorem~4]{BozokiFulopRonyai2010} uses the normalisation $w_n = 1$ ($y_i = \log w_i = 0$) instead of $\sum_{i=1}^n y_i = 0$ to treat the singularity of Laplacian matrix $\mathbf{L}$, and delete the last columns and row of $\mathbf{L}$. We think that the equivalent normalisation of $\sum_{i=1}^n y_i = 0$ \citep[Lemma~2]{Csato2015a} is more convenient as it does not differentiate between the alternatives.

Formula~\eqref{eq_LLSM} has appeared in \citet{KaiserSerlin1978} and \citet{CaklovicKurdija2017}, too, independently of \citet{BozokiFulopRonyai2010}.

The logarithmic least squares method has recently been suggested as a promising technique to derive priorities from a best-worst method matrix in \citet{TuWuPedrycz2023} and \citet{XuWang2024}.

A priority vector $\mathbf{w} = \left[ w_i \right]$ shows an \emph{ordinal violation} of preferences if $w_i \leq w_j$ but the pairwise comparison matrix contains the entry $a_{ij} > 1$. The number of violations has been introduced by \citet{GolanyKress1993} as an important criterion to evaluate prioritisation techniques, and has been widely accepted in the literature \citep{ChenKouLi2018, Csato2024b, CsatoRonyai2016, FaramondiOlivaBozoki2020, TuWu2021, TuWu2023, TuWuPedrycz2023, YuanWuTu2023, WangPengKou2021}.

\section{The main results} \label{Sec3}

In the following, two theorems are provided that guarantee the lack of any ordinal violation if the logarithmic least squares method is applied to a best-worst method (incomplete pairwise comparison) matrix. Both contain a lower bound for the preferences between the best alternative and all other alternatives, as well as between all other alternatives and the worst alternative. In other words, the best (worst) alternative should be ``sufficiently'' good (bad). Furthermore, a higher bound is required for all pairwise comparisons to avoid that an alternative is excessively dominant over (dominated by) the worst (best) alternative.

The proofs strongly exploit the closed-form solution of the logarithmic least squares method, given by a system of linear equations. Both the mathematical equivalence of the geometric mean of weight vectors calculated from all spanning trees and the logarithmic least squares problem \citep{BozokiTsyganok2019} and the determination of uncertainty bounds for pairwise comparisons \citep{FaramondiOlivaSetolaBozoki2023a} are based on this favourable property of the incomplete logarithmic least squares method.

\begin{theorem} \label{Theo1}
Assume that the best alternative is at least $p$ times better than all other alternatives, and the worst alternative is at least $p$ times worse than all other alternatives. Furthermore, the maximal numerical preference is at most $p^3$.
Then the logarithmic least squares priorities do not contain any ordinal violation in a best-worst method matrix.
\end{theorem}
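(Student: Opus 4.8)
The plan is to exploit the very rigid structure of the undirected graph behind a best-worst method matrix: both the best node $B$ and the worst node $W$ are adjacent to every other node (and to each other), while each remaining ``middle'' node $j$ has degree two and is adjacent only to $B$ and $W$. Writing $y_i=\log w_i$ and $r_i=\sum_{k:\,a_{ik}\text{ known}}\log a_{ik}$ as in \eqref{eq_LLSM}, the equation of $\mathbf{L}\mathbf{y}=\mathbf{r}$ at a middle node $j$ reads $2y_j-y_B-y_W=r_j$, hence $y_j=\tfrac12(y_B+y_W+r_j)$. First I would substitute this into the equations at $B$ and $W$ and into $\sum_i y_i=0$; everything then collapses to the closed form $y_B=r_B/n$, $y_W=r_W/n$, and $y_j=(nr_j-R)/(2n)$ for $j\neq B,W$, where $R=\sum_{k\neq B,W}r_k$, $r_B=\log a_{BW}+\sum_{k\neq B,W}\log a_{Bk}$, $r_W=-\log a_{BW}-\sum_{k\neq B,W}\log a_{kW}$, and $r_j=\log a_{jW}-\log a_{Bj}$; the identity $\sum_i r_i=0$ (reciprocity) guarantees the singular system is consistent.

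Next I would reduce the theorem to scalar inequalities. The entries exceeding $1$ in a best-worst method matrix are exactly $a_{Bj}$ ($j\neq B$) and $a_{jW}$ ($j\neq W$), so the priority vector is free of ordinal violations precisely when $w_B$ is the strict maximum and $w_W$ the strict minimum. The pair $(B,W)$ is harmless because $y_B-y_W=(r_B-r_W)/n>0$ (all of $a_{BW}$, $a_{Bk}$, $a_{kW}$ exceed $1$), and the matrix imposes no constraint among the middle alternatives; hence it suffices to show $y_B\ge y_j$ and $y_j\ge y_W$ for every middle $j$ (strictly, apart from the borderline case below). From the closed form,
\[
2n(y_B-y_j)=2\log a_{BW}+\sum_{k\neq j}\bigl(\log a_{Bk}+\log a_{kW}\bigr)+(n+1)\log a_{Bj}-(n-1)\log a_{jW}.
\]
Moreover the transpose $\mathbf{A}^{\top}$ is again a best-worst method matrix---with $B$ and $W$ interchanged, satisfying the same hypotheses, and with logarithmic least squares solution $(-y_i)$---so establishing $y_B\ge y_j$ in general also delivers $y_j\ge y_W$.

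Now the hypotheses come in. In the displayed expression the only negative coefficient multiplies $\log a_{jW}$, which is at most $\log(p^{3})=3\log p$; every other logarithm occurring ($\log a_{BW}$, and $\log a_{Bk}$, $\log a_{kW}$ for the $n-3$ values $k\neq j$) is at least $\log p$. Substituting these extremal values yields $2n(y_B-y_j)\ge\bigl[\,2+2(n-3)+(n+1)-3(n-1)\,\bigr]\log p=0$, the bracket vanishing identically in $n$; hence $y_B\ge y_j$, and by the transpose duality $y_j\ge y_W$, which closes the proof.

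The hard part is the exactness of this cancellation, not the (routine) algebra: the argument hinges on the exponent $3$ in $p^{3}$ being precisely the threshold, since a larger power of $p$ would make the bracket negative and permit a genuine violation. The bound above is attained with equality exactly when $a_{BW}=a_{Bk}=a_{kW}=p$ for all $k$, $a_{Bj}=p$, and $a_{jW}=p^{3}$; these borderline matrices---on which the logarithmic least squares weights tie the best alternative with a middle one---are precisely the instances the proof exposes where an ordinal violation in the weak ($\le$) sense of the definition fails to be strictly avoided, and perturbing $a_{jW}$ slightly above $p^{3}$ turns the tie into a strict violation. So the genuinely delicate point is the bookkeeping of this extremal configuration: verifying that the bound is attained only there and settling whether the conclusion is meant with weak or strict violations.
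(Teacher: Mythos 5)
Your proposal is correct and takes essentially the same route as the paper: you derive the same closed-form identities $n y_B = r_B$, $n y_W = r_W$ and $2y_j = y_B + y_W + r_j$ from \eqref{eq_LLSM}, and your vanishing bracket $2+2(n-3)+(n+1)-3(n-1)=0$ is exactly the paper's comparison of $p^{n-1}$ with $\left(1/p\right)^{n-2}\left(1/p\right)^{n}p^{3(n-1)}$ rewritten additively in logarithms. The only cosmetic differences are that you argue directly rather than by contradiction, invoke transpose duality instead of the paper's ``analogous'' treatment of the case $y_j < y_W$, and explicitly exhibit the boundary configuration with $y_B=y_j$ at $a_{jW}=p^{3}$ --- a tie that the paper's strict-inequality argument likewise does not exclude.
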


\begin{proof}
Note that the best and worst alternatives are compared to all other alternatives, thus, $\ell_{BB} = \ell_{WW} = n-1$ in equation \eqref{eq_LLSM}. On the other hand, $\ell_{jj} = 2$ for any third alternative $j \neq B,W$.
 
Due to \eqref{eq_LLSM}, the logarithmic least squares priorities satisfy the following equation for the best alternative $B$:
\begin{equation} \label{eq_Proof1}
(n-1) y_B - y_W - \sum_{k \neq B,W} y_k = \log \left( \prod_{k=1}^n a_{Bk} \right)
\end{equation}
Due to \eqref{eq_LLSM}, the logarithmic least squares priorities satisfy the following equation for the worst alternative $W$:
\begin{equation} \label{eq_Proof2}
(n-1) y_W - y_B - \sum_{k \neq B,W} y_k = \log \left( \prod_{k=1}^n a_{Wk} \right)
\end{equation}
Due to \eqref{eq_LLSM}, the logarithmic least squares priorities satisfy the following equation for any third alternative $j \neq B,W$:
\begin{equation} \label{eq_Proof3}
2y_j - y_B - y_W = \log \left( a_{jB} a_{jW} \right).
\end{equation}
Since the sum of the logarithmic weights is normalised to 0 in \eqref{eq_LLSM}, by adding $\sum_{i=1}n y_i = 0$ to the expression on the right hand-side of \eqref{eq_Proof1} and \eqref{eq_Proof2}, respectively:
\begin{equation} \label{eq_Proof4}
n y_B = \log \left( \prod_{k=1}^n a_{Bk} \right);
\end{equation}
\begin{equation} \label{eq_Proof5}
n y_W = \log \left( \prod_{k=1}^n a_{Wk} \right).
\end{equation}

An ordinal violation occurs if $y_j > y_B$ or $y_j < y_W$ for $j \neq B,W$.
Let us consider the first case when $y_j > y_B$. On the basis of equations~\eqref{eq_Proof3}, \eqref{eq_Proof4}, \eqref{eq_Proof5}, this is equivalent to
\begin{equation} \label{eq_Proof6}
\log \left( \prod_{k=1}^n a_{Bk} \right) < \log \left( \prod_{k=1}^n a_{Wk} \right) + n \log \left( a_{jB} a_{jW} \right).
\end{equation}
Since $a_{Wj} a_{jW} = 1$, we get
\begin{equation} \label{eq_Proof7}
\prod_{k=1}^n a_{Bk} < \left( \prod_{k=1, k \neq j}^n a_{Wk} \right) a_{jB}^n a_{jW}^{n-1}.
\end{equation}
If the best alternative is at least $p$ times better than all other alternatives, then the left hand side of~\eqref{eq_Proof7} is at least $p^{n-1}$.
If the worst alternative is at least $p$ times worse than all other alternatives and the maximal preference is smaller than $p^3$, then the right hand side of~\eqref{eq_Proof7} cannot be higher than
\begin{equation} \label{eq_Proof8}
\left( \frac{1}{p} \right)^{n-2} \left( \frac{1}{p} \right)^{n} p^{3(n-1)} = p^{n-1},
\end{equation}
which results in a contradiction.

The calculation for the second case of $y_j < y_W$ is analogous.
\end{proof}

\begin{remark} \label{Rem1}
Interestingly, the upper bound $p^3$ in Theorem~\ref{Theo1} does not depend on the number of alternatives $n$.
\end{remark}

The proof of Theorem~\ref{Theo1} helps construct a best-worst method matrix for which the logarithmic least squares method shows an ordinal violation.

\begin{example} \label{Examp1}
Assume that the best-worst method is used with the Saaty scale of
\[
\{ 1/9, 1/8, \dots ,1/2,1,2, \dots ,8,9 \}.
\]
In the matrix below, the first alternative is the best (which is better than all other alternatives by a factor of at least 2) and the last, sixth alternative is the worst (which is worse than all other alternatives by a factor of at least 2):
\[
\mathbf{A} = \left[
\begin{array}{K{2.5em} K{2.5em} K{2.5em} K{2.5em} K{2.5em} K{2.5em}}
    1     	 & 2	 	& 2    & 2 	  & 2    & 2 \\
    1/2		 & 1     	& \ast & \ast & \ast & 9 \\
    1/2      & \ast    	& 1    & \ast & \ast & 2 \\
    1/2      & \ast    	& \ast & 1    & \ast & 2 \\
    1/2      & \ast  	& \ast & \ast & 1    & 2 \\
    1/2      & 1/9    	& 1/2  & 1/2  & 1/2  & 1 \\
\end{array}
\right],
\]
where $\ast$ denotes a missing comparison as usual.
The priorities according to the logarithmic least squares method are:
\[
\mathbf{w} = \left[
\begin{array}{K{3em} K{3em} K{3em} K{3em} K{3em} K{3em}}
    0.2645 & 0.2778	& 0.1310 & 0.1310 & 0.1310 & 0.0648 \\
\end{array}
\right]^\top.
\]
Consequently, the second alternative has the highest weight even though the first has been identified as the best. Therefore, the preferences need to be reconsidered.
\end{example}

In the best-worst method, it is reasonable to assume that $a_{BW} \geq a_{jW}$ and $a_{BW} \geq a_{Bj}$ hold for any third alternative $j \neq B,W$, namely, the best alternative is preferred to the worst alternative at least as much as the preference between any two alternatives.

\begin{theorem} \label{Theo2}
Assume that the best alternative is at least $p$ times better than all other alternatives, and the worst alternative is at least $p$ times worse than all other alternatives. In addition, the best alternative is preferred to the worst alternative at least as much as the preference between any two alternatives. Finally, the maximal numerical preference is at most $p^{4 / (n-3) + 3}$.
Then the logarithmic least squares priorities do not contain any ordinal violation in a best-worst method matrix.
\end{theorem}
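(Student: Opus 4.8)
The plan is to run the argument of Theorem~\ref{Theo1} almost verbatim, but to extract one extra factor from each of the two products appearing in inequality~\eqref{eq_Proof7} and to estimate those factors by means of the new hypothesis $a_{BW} \geq a_{jW}$, $a_{BW} \geq a_{Bj}$. Equations~\eqref{eq_Proof1}--\eqref{eq_Proof5} remain valid, and, exactly as before, it suffices to rule out the two types of ordinal violation, $y_j > y_B$ and $y_j < y_W$ for some third alternative $j \neq B,W$. The second reduces to the first by passing to the transpose $\mathbf{A}^\top$: transposition leaves the Laplacian $\mathbf{L}$ unchanged, replaces $\mathbf{r}$ by $-\mathbf{r}$ and hence the logarithmic priority vector $\mathbf{y}$ by $-\mathbf{y}$, interchanges the roles of the best and the worst alternatives, and preserves every hypothesis (with $a_{BW} \geq a_{Bj}$ now playing the role of $a_{BW} \geq a_{jW}$). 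I would therefore write out only the case $y_j > y_B$.

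In this case, the same manipulation as in the proof of Theorem~\ref{Theo1} (using \eqref{eq_Proof3}--\eqref{eq_Proof5} together with $a_{Wj} a_{jW} = 1$) leads to inequality~\eqref{eq_Proof7},
\[
\prod_{k=1}^n a_{Bk} < \left( \prod_{k=1, k \neq j}^n a_{Wk} \right) a_{jB}^n a_{jW}^{n-1}.
\]
Now I would estimate the two sides more carefully than in Theorem~\ref{Theo1}. On the left, peeling off the factor $a_{BW} \geq a_{jW}$ and bounding the remaining $n-2$ factors $a_{Bk}$ from below by $p$ gives $\prod_{k=1}^n a_{Bk} \geq a_{jW}\, p^{n-2}$. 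On the right, peeling off the factor $a_{WB} = 1/a_{BW} \leq 1/a_{jW}$, bounding the other $n-3$ factors $a_{Wk}$ from above by $1/p$ and $a_{jB}^n$ by $p^{-n}$, and finally invoking the hypothesis in the form $a_{jW}^{n-3} \leq p^{3n-5}$ (equivalently, $a_{jW} \leq p^{4/(n-3)+3}$), the right-hand side of \eqref{eq_Proof7} is at most
\[
\frac{1}{a_{jW}} \cdot p^{-(n-3)} \cdot p^{-n} \cdot a_{jW}^{n-1} = p^{-(2n-3)} a_{jW}^{n-2} \leq p^{-(2n-3)}\, p^{3n-5}\, a_{jW} = p^{n-2} a_{jW}.
\]
Hence the right-hand side of \eqref{eq_Proof7} does not exceed $p^{n-2} a_{jW}$, which does not exceed its left-hand side --- the desired contradiction.

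The only non-routine decision is which factors to pull out of the two products: the whole gain over Theorem~\ref{Theo1} comes from taking $a_{BW}$ out of $\prod_{k} a_{Bk}$ and its reciprocal $a_{WB}$ out of $\prod_{k} a_{Wk}$ and then applying $a_{BW} \geq a_{jW}$ to both, which trades two crude estimates by $p$ for estimates by $a_{jW}$ and, after the cancellations, loosens the admissible exponent from $3$ to $3 + 4/(n-3)$. Everything else is the same bookkeeping with powers of $p$ as in the proof of Theorem~\ref{Theo1}. Two small points I would flag: the statement implicitly requires $n \geq 4$, so that the exponent $4/(n-3)$ is finite and positive---for $n = 3$ the best-worst method matrix is complete and the bound $p^3$ of Theorem~\ref{Theo1} is already sharp---and, in contrast with Remark~\ref{Rem1}, the upper bound on the maximal numerical preference now does depend on $n$, decreasing monotonically to $p^3$ as $n \to \infty$.
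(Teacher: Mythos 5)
Your proposal is correct and follows essentially the same route as the paper's own proof: both isolate the factor $a_{BW}$ on the left and $a_{WB}$ on the right of \eqref{eq_Proof7}, bound the remaining entries by powers of $p$, and invoke $a_{BW} \geq a_{jW}$ together with $a_{jW}^{n-3} \leq p^{3n-5}$ to reach the contradiction. The only differences are cosmetic---you apply $a_{BW} \geq a_{jW}$ at the outset rather than at the end, and you formalise the ``analogous'' case $y_j < y_W$ via transposition---and your remarks that $n \geq 4$ is implicitly needed and that the bound now depends on $n$ are accurate.
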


\begin{proof}
The steps of the proof of Theorem~\ref{Theo1} can be followed until formula \eqref{eq_Proof7} is reached:
\begin{equation} \label{eq_Proof9}
a_{BW} \prod_{k=1,k \neq W}^n a_{Bk} < a_{WB} \left( \prod_{k=1, k \neq j,W}^n a_{Wk} \right) a_{jB}^n a_{jW}^{n-1}.
\end{equation}
If the best alternative is at least $p$ times better than all other alternatives, then the left hand side of~\eqref{eq_Proof9} is at least $a_{BW} p^{n-2}$.
If the worst alternative is at least $p$ times worse than all other alternatives, then the right hand side of~\eqref{eq_Proof9} is at most
\begin{equation} \label{eq_Proof10}
a_{WB} \left( \frac{1}{p} \right)^{n-3} \left( \frac{1}{p} \right)^{n} a_{jW}^{n-1}.
\end{equation}
From \eqref{eq_Proof9}, we get
\begin{equation} \label{eq_Proof11}
a_{BW}^2 p^{3n-5} < a_{jW}^{n-1}.
\end{equation}
Since $a_{BW} \geq a_{jW}$, \eqref{eq_Proof11} implies
\begin{equation} \label{eq_Proof12}
p^{3n-5} < a_{jW}^{n-3}.
\end{equation}
However, $a_{jW} < p^{4/(n-3) + 3}$ according to the conditions of Theorem~\ref{Theo2}, which results in a contradiction.

The calculation for the second case of $y_j < y_W$ is analogous.
\end{proof}

\begin{remark} \label{Rem2}
Since $p^{4 / (n-3) + 3} \to p^3$ if $n \to \infty$, the additional restriction in Theorem~\ref{Theo2} compared to Theorem~\ref{Theo1} does not make ordinal violations less likely for best-worst method matrices with a high number of alternatives.
\end{remark}

\begin{remark} \label{Rem3}
Example~\ref{Examp1} does not satisfy the conditions of Theorem~\ref{Theo2} because the preference between the best and the worst alternatives (2) is weaker than the preference between the best (first) and the second alternatives (9).
\end{remark}

The following result uncovers that Theorem~\ref{Theo2} can be useful in practice, although it is not more powerful than Theorem~\ref{Theo1} if $n \to \infty$ according to Remark~\ref{Rem2}.

\begin{corollary} \label{Cor2}
Assume that the elements of a pairwise comparison matrix belong to the Saaty scale of $\{ 1/9, 1/8, \dots ,1/2,1,2, \dots ,8,9 \}$, and the best alternative is better than all other alternatives and the worst alternative is worse than all other alternatives. Furthermore, the best alternative is preferred to the worst alternative at least as much as the preference between any two alternatives. Then $p=2$ in Theorem~\ref{Theo2}, and our result guarantees the avoidance of ordinal violations if $2^{4/(n-3) + 3} > 9$, that is, for all $n \leq 26$.
\end{corollary}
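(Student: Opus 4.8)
The plan is to deduce Corollary~\ref{Cor2} directly from Theorem~\ref{Theo2}; essentially all of the work consists of reading off the admissible value of the lower bound $p$ from the arithmetic of the Saaty scale and then converting the polynomial upper bound of Theorem~\ref{Theo2} into an explicit inequality in $n$.

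First I would observe that on the Saaty scale $\{ 1/9, 1/8, \dots, 1/2, 1, 2, \dots, 8, 9 \}$ every element exceeding $1$ is an integer, hence is at least $2$, and symmetrically every element below $1$ is at most $1/2$. Since the best alternative is assumed to be (strictly) better than every other alternative and the worst alternative (strictly) worse than every other alternative, this yields $a_{Bj} \geq 2$ and $a_{jW} \leq 1/2$ for all $j \neq B, W$, so the first pair of hypotheses of Theorem~\ref{Theo2} holds with $p = 2$. The remaining structural hypothesis of Theorem~\ref{Theo2}---that the best alternative is preferred to the worst alternative at least as much as the preference between any two alternatives---is assumed outright in the corollary. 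It therefore only remains to check the last hypothesis, namely that the maximal numerical preference is at most $p^{4/(n-3)+3} = 2^{4/(n-3)+3}$; since every Saaty-scale entry is at most $9$, this holds as soon as $9 \leq 2^{4/(n-3)+3}$. As $\log_2 9$ is irrational while $4/(n-3) + 3$ is rational for every integer $n \geq 4$, equality can never occur, so this condition is equivalent to the strict inequality $2^{4/(n-3)+3} > 9$ recorded in the corollary.

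Finally I would solve the inequality. Taking base-$2$ logarithms turns it into $4/(n-3) > \log_2 9 - 3$, i.e.\ $n - 3 < 4 / (\log_2 9 - 3) \approx 23.5$, so that it holds precisely for $n \leq 26$; the two boundary evaluations $2^{4/23+3} \approx 9.02 > 9$ and $2^{4/24+3} \approx 8.98 < 9$ confirm that $n = 26$ is the last admissible case and $n = 27$ the first one that fails. Invoking Theorem~\ref{Theo2} with $p = 2$ then excludes ordinal violations for every $n$ with $4 \leq n \leq 26$ (Theorem~\ref{Theo2}, whose bound features $n-3$ in a denominator, being vacuous for $n \leq 3$, where the best-worst method matrix is in any case complete). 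I do not expect a genuine obstacle here: the only step that deserves a moment of attention is the passage from the weak inequality supplied by Theorem~\ref{Theo2} to the strict inequality in the statement, which is exactly where the irrationality of $\log_2 9$ is used.
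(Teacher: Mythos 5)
Your proposal is correct and matches the paper's (implicit) argument: the corollary is stated without a separate proof precisely because it is the direct instantiation of Theorem~\ref{Theo2} with $p=2$ (Saaty-scale entries above $1$ are at least $2$) and maximal preference $9$, followed by solving $2^{4/(n-3)+3} > 9$ to get $n \leq 26$. Your extra remark on the irrationality of $\log_2 9$ is harmless but unnecessary, since the strict inequality $2^{4/(n-3)+3} > 9$ already implies the weak bound required by Theorem~\ref{Theo2}.
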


\section{Discussion} \label{Sec4}

The current paper has proved two sufficient conditions for the logarithmic least squares priorities to avoid any violation of ordinal preferences in a best-worst method matrix. Our results provide a further argument for using the LLSM to derive priorities from best-worst pairwise comparison matrices since no similar guarantees exist for other prioritisation techniques. Thus, they may lead to ordinal violations, which require further interaction with the decision-maker as presented in the Introduction.

\subsection{A comparison of the main theorems} \label{Sec41}

In the previous literature, there are no analogous sufficient conditions that guarantee the lack of an ordinal violation. On the other hand, Theorems~\ref{Theo1} and \ref{Theo2} are worth comparing to each other. Both require a uniform lower bound $p$ on the preference between the best alternative and all other alternatives, as well as between all other alternatives and the worst alternative. The maximal numerical preference is allowed to be $p^3$ in Theorem~\ref{Theo1} but $p^{4/(n-3) + 3} > p^3$ in Theorem~\ref{Theo2}.
However, Theorem~\ref{Theo2} contains an additional constraint: the best alternative should be preferred to the worst alternative at least as much as the preference between any two alternatives, that is, the pairwise comparison between the best and worst alternatives needs to be the highest one. This supplementary restriction can be seen as the ``price'' to pay for having a less restrictive upper bound in Theorem~\ref{Theo2} than in Theorem~\ref{Theo1}.

\subsection{Finding ordinal violations via numerical experiments} \label{Sec42}

Recall that the thorough Monte Carlo experiment of \citet{TuWuPedrycz2023} has not resulted in any ordinal violation by the logarithmic least squares method. They have generated 10 thousand random best-worst pairwise comparison matrices with six alternatives based on the Saaty scale. It is possible to exactly determine the probability that an ordinal violation occurs under these assumptions.

\begin{proposition} \label{Prop1}
Assume that the elements of a best-worst method matrix of size six belong to the Saaty scale of $\{ 1/9, 1/8, \dots ,1/2,1,2, \dots ,8,9 \}$, the best alternative is better than all other alternatives and the worst alternative is worse than all other alternatives.
The number of different matrices with this property is $8^9 = 134{,}217{,}728$.
Among them, $7^9 = 40{,}353{,}607$ matrices (30.1\%) satisfy the sufficient conditions of Theorem~\ref{Theo1}.
The logarithmic least squares method yields an ordinal violation for only 56 matrices.
\end{proposition}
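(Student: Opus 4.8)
The plan is to treat the three assertions in turn. The first two are elementary counting; the third, though phrased over all $8^9$ matrices, can be cut down by Theorem~\ref{Theo1} and by the closed-form equations \eqref{eq_Proof3}--\eqref{eq_Proof5} to an enumeration small enough to run by hand.

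For the first count, a best-worst method matrix of size $n$ is determined by the $2n-3$ entries $a_{Bj}$, $a_{jW}$ with $j \neq B,W$ together with $a_{BW}$, which is $9$ numbers when $n = 6$. Under the standing assumptions each of these is a comparison exceeding $1$, hence one of the $8$ Saaty values $\{2,3,\dots,9\}$, giving $8^9$ matrices. For the second assertion I would fix $p = 2$ in Theorem~\ref{Theo1}: on the Saaty scale ``$B$ is at least $p$ times better than each other alternative'' and ``each alternative is at least $p$ times better than $W$'' hold automatically, so the only effective hypothesis is that the maximal numerical preference not exceed $p^3 = 8$, i.e.\ that all $9$ entries lie in $\{2,\dots,8\}$. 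That gives $7^9 = 40{,}353{,}607$ matrices, and $7^9/8^9 = (7/8)^9 \approx 0.3007$, i.e.\ about $30.1\%$.

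For the third assertion, Theorem~\ref{Theo1} already excludes an ordinal violation whenever every entry is at most $8$, so only matrices with an entry equal to $9$ have to be inspected; moreover (as in the proof of Theorem~\ref{Theo1}) an ordinal violation can only be $y_j > y_B$ or $y_j < y_W$ for a middle alternative $j \neq B,W$. Write $b_j = a_{Bj}$, $d_j = a_{jW}$ for the four middle alternatives and $c = a_{BW}$. Specialising \eqref{eq_Proof3}, \eqref{eq_Proof4} and \eqref{eq_Proof5} to $n = 6$ and eliminating $y_B$, $y_W$ shows that $y_j > y_B$ is equivalent to
\[
d_j^{5} \;>\; c^{2}\, b_j^{7} \prod_{i \neq j} b_i \prod_{i \neq j} d_i ,
\]
with the two products over the three middle alternatives other than $j$. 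Since the right-hand side is at least $2^{2}\cdot 2^{7}\cdot 2^{3}\cdot 2^{3} = 2^{15} = 8^{5}$, the inequality forces $d_j = 9$; substituting this and using $c \geq 2$, $\prod_{i \neq j} b_i \geq 2^{3}$ and $\prod_{i \neq j} d_i \geq 2^{3}$ forces, in turn, $b_j = 2$, then $c = 2$, and finally $\prod_{i \neq j} b_i \prod_{i \neq j} d_i < 9^{5}/2^{9} \approx 115.3$. A product of six Saaty values, each at least $2$, that is at most $115$ can only be $2^{6} = 64$ (all six equal to $2$) or $2^{5}\cdot 3 = 96$ (exactly one equal to $3$): that is $1 + 6 = 7$ choices for each of the $4$ middle alternatives $j$, hence $28$ matrices with a $y_j > y_B$ violation, and none of them is counted twice because the relevant $d_{j'}$ lies in $\{2,3\}$ for the other middle alternatives $j'$. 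Finally, the transpose $\mathbf{A} \mapsto \mathbf{A}^{\top}$ negates the logarithmic weights and swaps the roles of $B$ and $W$, hence is a bijection between the matrices with a $y_j > y_B$ violation and those with a $y_j < y_W$ violation; since the former all satisfy $b_i \leq 3$ for every middle $i$ while the latter force some $b_{j'} = 9$, the two sets are disjoint, and the total count is $28 + 28 = 56$.

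I expect the third part to be the main obstacle: deriving the multiplicative violation inequality from \eqref{eq_Proof3}--\eqref{eq_Proof5}, and then calibrating the successive bounds ($d_j = 9$, $b_j = 2$, $c = 2$, product $\leq 115$) tightly enough to leave only finitely many configurations but not so aggressively as to drop a genuine violation. Equally delicate is the bookkeeping that establishes no double counting, so that the answer is exactly $56$; an exhaustive machine search over the $8^9$ (or over the $8^9 - 7^9$) matrices is a convenient independent verification.
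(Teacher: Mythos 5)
Your proof is correct and follows essentially the same route as the paper: fix $p=2$ to obtain the $8^9$ and $7^9$ counts, then specialise the violation inequality from the proof of Theorem~\ref{Theo1} to $n=6$ and enumerate exactly $28+28=56$ violating matrices (the per-$j$ count of $1+6=7$ patterns matches the paper's $4+4\times 6=28$). Your write-up is in fact more explicit than the paper's, which justifies the violating patterns only by reference to Example~\ref{Examp1} and asserts the $28$ ``analogous'' worst-side matrices without the equivalence derivation, the transpose symmetry, or the disjointness/no-double-counting checks that you supply.
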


\begin{proof}
It can be assumed without loss of generality that the best alternative is the first and the worst is the sixth as in Example~\ref{Examp1}. The conditions of Theorem~\ref{Theo1} hold if $a_{16} \leq 8 = 2^3$, $a_{1i} \leq 8$ and $a_{i6} \leq 8$ for all $2 \leq i \leq 5$. However, an ordinal violation occurs if $a_{16} = 2$, $a_{1i} = 2$ and $a_{i6} = 9$ for $2 \leq i \leq 5$, as well as $a_{1j} = 2$ and $a_{j6} = 2$ for all $2 \leq j \leq 5$, $j \neq i$ according to Example~\ref{Examp1}. The number of these matrices is four as $2 \leq i \leq 5$.

Furthermore, from the set of six comparisons $a_{1i}$ and $a_{i6}$ ($2 \leq j \leq 5$, $j \neq i$), at most one can equal 3 rather than 2 such that the weight of alternative $j$ remains higher than the weight of the best (first) alternative. This gives six additional matrices for each $2 \leq i \leq 5$. Consequently, the number of matrices where the best alternative does not have the greatest weight is $4 + 4 \times 6 = 28$.

Finally, there are 28 analogous matrices where the worst alternative does not have the smallest weight.
\end{proof}

\begin{corollary}
If $K$ matrices are generated with the method of \citet{TuWuPedrycz2023}, the probability that the logarithmic least squares method does not show an ordinal violation is $q = \left( 1 - 56/ 8^9 \right)^K$. \citet{TuWuPedrycz2023} use $K = 10{,}000$, which implies $q \approx 0.9958$. $q < 0.5$ requires at least $1{,}661{,}297$ simulation runs in this framework.
\end{corollary}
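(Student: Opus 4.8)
The plan is to reduce everything to Proposition~\ref{Prop1} and a short computation. First I would observe that the generation procedure of \citet{TuWuPedrycz2023} draws a best-worst method matrix of size six uniformly at random from the $8^9$ admissible matrices described in Proposition~\ref{Prop1}, and that the $K$ runs are mutually independent. Since Proposition~\ref{Prop1} identifies exactly $56$ of these matrices as producing an ordinal violation under the logarithmic least squares method, a single run avoids an ordinal violation with probability $1 - 56/8^9$, and by independence the probability that all $K$ runs avoid it equals $q = \left( 1 - 56/8^9 \right)^K$, which is the first assertion.

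For the numerical claims I would substitute $8^9 = 134{,}217{,}728$, so that $56/8^9 = 7/2^{24} \approx 4.1723 \times 10^{-7}$. With $K = 10{,}000$ this gives $q = \left( 1 - 7/2^{24} \right)^{10{,}000}$; taking logarithms, $\ln q = 10{,}000 \ln\!\left( 1 - 7/2^{24} \right) \approx -4.172 \times 10^{-3}$, hence $q \approx e^{-0.004172} \approx 0.9958$. For the last claim I would solve $\left( 1 - 7/2^{24} \right)^K < 1/2$, equivalently $K > \ln 2 \, / \, \bigl( -\ln(1 - 7/2^{24}) \bigr)$; evaluating the right-hand side, using $-\ln(1-x) = x + x^2/2 + \cdots$ with the quadratic and higher-order terms negligible here, yields a threshold lying strictly between $1{,}661{,}296$ and $1{,}661{,}297$, so the smallest admissible integer is $K = 1{,}661{,}297$.

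The argument is entirely elementary once Proposition~\ref{Prop1} is in hand; the only point requiring minor care is the final numerical estimate, where one must keep enough precision in $\ln 2$ and in $7/2^{24}$ to be certain that the ceiling of the threshold is $1{,}661{,}297$ rather than $1{,}661{,}296$. Everything else follows immediately from the independence of the simulation runs and the exact count of violating matrices.
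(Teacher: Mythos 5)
Your proposal is correct and follows exactly the reasoning the paper intends: the corollary is stated without a separate proof precisely because it follows from Proposition~\ref{Prop1} by assuming uniform, independent draws from the $8^9$ admissible matrices and computing $q = \left(1 - 56/8^9\right)^K$. Your numerical checks ($q \approx 0.9958$ for $K = 10{,}000$, and the threshold $\ln 2 / \bigl(-\ln(1 - 7/2^{24})\bigr) \approx 1{,}661{,}296.8$, giving $K = 1{,}661{,}297$ as the smallest admissible integer) are accurate.
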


\subsection{Limitations} \label{Sec43}

Even though our paper has provided the first sufficient conditions to avoid ordinal violations in a best-worst method matrix, the results have substantial limitations.
First, the theorems are only sufficient but not necessary conditions; one can easily find an example where the uniform lower bound $p$  is close to one, which makes our findings essentially useless.
Second, the restriction on the maximal numerical preference is also universal, but allowing the pairwise comparison between the best and the worst alternatives to be higher than other comparisons may lead to further theorems of a similar flavour.
Third, the sufficient conditions are given only for one particular prioritisation technique, the logarithmic least squares method.

Finally, the issue of ordinal violations has been approached from a purely theoretical perspective. A large database of best-worst method matrices provided by different decision-makers is worth studying in order to estimate the probability of an ordinal violation under several weighting methods.

\subsection{Open questions} \label{Sec44}

Several promising directions exist for future research.
First, if the conditions of Theorem~\ref{Theo1} or Theorem~\ref{Theo2} are not satisfied, then even the logarithmic least squares method may exhibit ordinal violations. It remains to be seen how the probability of an ordinal violation is related to the level of inconsistency in this case.
Second, it would also be interesting to see similar theoretical results for other priority models proposed by \citet{XuWang2024} in the case of best-worst method matrices.
Third, some experiments are needed to check how the decision-makers who evaluate a problem by the best-worst method react to the additional restrictions required by our theorems.

\section*{Acknowledgements}
\addcontentsline{toc}{section}{Acknowledgements}
\noindent
We are grateful to \emph{Lavoslav {\v C}aklovi\'c} for useful remarks. \\
Three anonymous reviewers and seven colleagues provided valuable comments and suggestions on earlier drafts. \\
The research was supported by the National Research, Development and Innovation Office under Grant FK 145838 and the J\'anos Bolyai Research Scholarship of the Hungarian Academy of Sciences.

\bibliographystyle{apalike}
\bibliography{All_references}

\end{document}